\newtheorem{example}{Example}[section]
\newtheorem{thm}{Theorem}[section]
\newcommand {\mat}  [1] {\left[\begin{array}{#1}}
\newcommand {\rix}      {\end{array}\right]}
\begin{document}
\pagestyle{plain}

\title{Graph Directed Coalescence Hidden Variable Fractal Interpolation Functions}
\author{Md.~Nasim Akhtar and M.~Guru Prem Prasad \\
Department of Mathematics \\
Indian Institute of Technology Guwahati\\
Guwahati~781039, Assam, India \\
}

\thanks{Correspondence Email: nasim@iitg.ernet.in, nasim.iitm@gmail.com}
\date{}
\maketitle
\begin{abstract}
Fractal interpolation function (FIF) is a special type of continuous function which interpolates certain data set and  the attractor of the Iterated function system (IFS) corresponding to the data set is the graph of the FIF. Coalescence Hidden-variable Fractal Interpolation Function (CHFIF) is both self-affine and non self-affine in nature depending on the free variables and constrained free variables for a generalized IFS. In this article graph directed iterated function system for a finite number of generalized data sets is considered and  it is shown that the projections of the attractors on $\mathbb{R}^{2}$ is the graph of the CHFIFs interpolating the corresponding data sets.

\end{abstract}
\section{Introduction}
The concept of fractal interpolation function (FIF) based on an iterated function system (IFS)  as a fixed point of  Hutchinson's operator is introduced by Barnsley \cite{barnsley-1986, barnsley-1988}. The attractor of the IFS is the graph of the fractal function interpolating certain data set. These FIFs are generally self-affine in nature.
The idea has been extended
to a generalized data set in $\mathbb{R}^{3}$ such that the projection of the graph of the corresponding FIF
onto $\mathbb{R}^{2}$  provides a non self-affine interpolation function namely Hidden variable FIFs for a given data set $\{(x_{n}, y_{n}): n=0, 1, \ldots, N\}$ \cite{barnsley-1989a}.
Chand and Kapoor \cite{chand-2007}, introduced the concept of Coalescence hidden variable FIFs which are both self-affine and non self-affine for generalized IFS. The extra degree of freedom is useful to adjust the shape and fractal dimension of the interpolation functions.
In \cite{barnsley-1989},  Barnsley  et al. proved  existence of a differentiable FIF.
The continuous but nowhere differentiable fractal function namely $\alpha$-fractal interpolation function $f^{\alpha}$ is introduced by Navascues as perturbation of a continuous function $f$ on a compact interval $I$ of $\mathbb{R}$~\cite{navascues-2005b, navascues-2005}. Interested reader can see for the theory and application of  $\alpha$-fractal interpolation function $f^{\alpha}$
which has been extensively explored by Navascues \cite{navascues-2010a,  navascues-2011, navascues-2005}.
\par
In \cite{deniz-15} Deniz et al. considered graph-directed iterated function system for finite number of data sets and proved the existence of fractal functions  interpolating corresponding data sets with graphs as the attractor of the GDIFS.
\par
In the present work, generalized GDIFS for generalized interpolation data sets in $\mathbb{R}^{3}$ has taken. It is shown that, corresponding to the data sets there exists CHFIFs whose graph is the projection on $\mathbb{R}^{2}$ of the attractors of the GDIFS.
\section{Preliminaries}
\subsection{Iterated Function System}
Let $\mathcal{X}\subset\mathbb{R}^{n}$ and $(\mathcal{X}, d_{\mathcal{X}})$ be a complete metric space. Also assume $\mathcal{H}(\mathcal{X})= \{S\subset \mathcal{X}; S\neq \Phi, S \; \mbox{is compact in} \; \mathcal{X}\}$ with the Hausdorff metric $d_{\mathcal{H}}(A, B)$ defined as $d_{\mathcal{H}}(A, B)= \max\{d_{\mathcal{X}}(A, B), d_{\mathcal{X}}(B, A)\}$, where $d_{\mathcal{X}}(A, B)=\max_{x\in A}\min_{y\in B}d_{\mathcal{X}}(x, y)$ for any two sets $A, B$ in $\mathcal{H}(\mathcal{X})$. $(\mathcal{H}, d_{\mathcal{H}})$ is a complete metric space whenever $(\mathcal{X}, d_{\mathcal{X}})$ is complete. Let for $i=1, \; 2, \; \ldots, \; N$, $w_{i}: \mathcal{X}\to \mathcal{X}$ are continuous maps then $\{\mathcal{X}; w_{i}: i=1, \; 2, \; \ldots, \; N\}$ is called an iterated function system (IFS). If the maps $w_{i}$'s are contraction then, the set valued Hutchinson operator $W:\mathcal{H}(\mathcal{X})\to \mathcal{H}(\mathcal{X})$ defined by $W(B)=\bigcup_{i=1}^{N}w_{i}(B)$, where $w_{i}(B):= \{w_{i}(b): b\in B\}$ is also contraction. Then by Banach fixed point theorem, there exists a unique set $A\in\mathcal{H}(\mathcal{X})$ such that $A=W(A)=\bigcup_{i=1}^{N}w_{i}(A)$. The set $A$ is called the attractor associated with the IFS $\{\mathcal{X}; w_{i}: i=1, \; 2, \; \ldots, \; N\}$.
\subsection{Fractal Interpolation Function}
Let a set of interpolation points $\{(x_{i},y_{i}) \in I\times\mathbb{R} \; : \; i=0, \; 1, \; \dotsc, \;  N\}$ be given,
where  $\Delta: \, x_{0}<x_{1}< \dotsb <x_{N}$ is a partition of the closed interval
$I=[x_{0}, \, x_{N}]$ and $y_{i}\in [g_{1},g_{2}]\subset \mathbb{R}$, $i=0$, $1$, $\dotsc$, $N$.
Set $I_{i}=[x_{i-1},x_{i}]$ for $i=1$, $2$, $\dotsc$, $N$ and $K=I\times [g_{1},g_{2}]$.
Let $L_{i}: I\to I_{i}, i=1, \; 2, \; \dotsc,  N$, be contraction homeomorphisms such that
  \begin{equation}\label{l1}
   L_{i}(x_{0})=x_{i-1}, \; \; \; L_{i}(x_{N})=x_{i},
   \end{equation}
   \begin{equation}
   |L_{i}(c_{1})-L_{i}(c_{2})|\leq d|c_{1}-c_{2}| \; \mbox{for all}  \; c_{1} \; \mbox{and} \;  c_{2} \; \mbox{in} \; I , \label{l2}
   \end{equation}
   for some $0\leq d<1$. Furthermore, let  $H_{i}: K\rightarrow \mathbb{R}$,  $i=1, \; 2, \; \dotsc,  N$,  be given continuous functions such that
   \begin{equation}
   H_{i}(x_{0},y_{0})=y_{i-1}, \; \; \; H_{i}(x_{N},y_{N})=y_{i}, \label{f01}
   \end{equation}
   \begin{equation}
   |H_{i}(x,\xi_{1})-H_{i}(x,\xi_{2})|\leq |\alpha_{i}||\xi_{1}-\xi_{2}|  \label{f2}
   \end{equation}
   for all $x\in I$ and for all $\xi_{1}$ and $\xi_{2}$ in  $[g_{1},g_{2}]$,
   for some $\alpha_{i}\in (-1,1), i=1, \; 2, \; \ldots,  N$.
   Define mappings $W_{i}: K\to I_{i}\times \mathbb{R}, i=1, \; 2, \; \dotsc,  N$ by
   $$W_{i}(x,y)=( L_{i}(x), H_{i}(x,y)) \; \mbox{for all} \; (x,y)\in K\; .$$
   Then
   \begin{equation}
   \{K;\; W_{i}(x,y)\; :  \; i=1, \; 2, \; \dotsc,  N\}  \; \label{ifs1}
   \end{equation}
   constitutes an IFS.
 Barnsley \cite{barnsley-1986} proved that the IFS $\{K; W_{i}: i=1, \; 2, \; \dotsc, \;  N \}$
 defined above has a unique attractor $G$ where $G$ is the graph of a continuous function
 $f:I\to \mathbb{R}$ which obeys  $f(x_{i})=y_{i}$ for $i=0$, $1$, $\ldots$, $N$.
 This function $f$ is called a fractal interpolation function (FIF) or simply fractal function
 and it is the unique function satisfying the following fixed point equation
   \begin{equation}
   f(x)= F_{i}(L_{i}^{-1}(x),f(L_{i}^{-1}(x)))\; \mbox{for all} \;  x\in I_{i}, \; i=1, \; 2, \; \ldots,  N. \label{fix}
   \end{equation}
   The widely studied FIFs so far are defined by the iterated mappings
   \begin{equation}
    L_{i}(x)= a_{i}\; x\; + \; d_{i}, \; \;  F_{i}(x,y)= \alpha_{i} \; y \; + q_{i}(x), \; \; i=1, \; 2, \; \dotsc,  N, \label{ifsf}
    \end{equation}
   where the real constants $a_{i}$ and $d_{i}$ are determined by the condition \eqref{l1} as
  \begin{equation}
   a_{i}=\frac{(x_{i}-x_{i-1})}{(x_{N}-x_{0})} \; \; \mbox{and}\; \; d_{i}= \frac{(x_{N}x_{i-1}-x_{0}x_{i})}{(x_{N}-x_{0})} , \label{a1}
   \end{equation}
   and $q_{i}(x)$'s are suitable continuous functions such that the conditions \eqref{f01} and \eqref{f2} hold.
   For each $i$,
    $\alpha_{i}$ is a free parameter with $\vert \alpha_{i} \vert < 1$ and is called a vertical scaling
    factor of the transformation $W_{i}$.  Then the vector $\alpha=(\alpha_{1},\alpha_{2}, \; \ldots, \;  \alpha_{N})$
    is called the scale vector of the IFS. If   $q_{i}(x)$ is taken as linear then the corresponding FIF is known as affine FIF (AFIF).
\subsection{Coalescence FIF}
To construct a Coalescence Hidden-variable Fractal Interpolation Functions, a set of real parameters $z_{i}$ for $i=1, \; 2, \; \dotsc,  N$ are introduced and the generalized interpolation data $\{(x_{i},y_{i},z_{i}) \in\mathbb{R}^{3} \; : \; i=0, \; 1, \; \dotsc, \;  N\}$ is considered. Then define the maps $w_{i}: I\times \mathbb{R}^{2}\to I_{i}\times \mathbb{R}^{2}, i=1, \; 2, \; \dotsc,  N$ by
   $$w_{i}(x,y,z)=( L_{i}(x), F_{i}(x,y,z)) $$
   where, $L_{i}: I\to I_{i}, i=1, \; 2, \; \dotsc,  N$ are given in (\ref{ifsf}), and the functions $ F_{i}:I\times \mathbb{R}^{2}\to\mathbb{R}^{2}$ such that  $F_{i}(x,y,z)=(F_{i}^{1}(x,y,z), F_{i}^{2}(x,y,z))=(\alpha_{i}y+\beta_{i}z+ c_{i}x+ d_{i}, \gamma_{i}z+ e_{i}x+ f_{i})$ satisfy the join-up conditions
$$F_{i}(x_{0},y_{0},z_{0})=(y_{i-1}, z_{i-1}) \;  \mbox{and} \; F_{i}(x_{N},y_{N},z_{N})=(y_{i}, z_{i}) \; .$$
Here $\alpha_{i}, \gamma_{i}$ are free variables with $|\alpha_{i}|< 1$, $|\gamma_{i}|< 1$ and $\beta_{i}$ are constrained variable such that $|\beta_{i}|+ |\gamma_{i}|< 1$. Then the generalized IFS
$$\{I\times \mathbb{R}^{2};\; w_{i}(x,y,z)\; :  \; i=1, \; 2, \; \dotsc,  N\}$$
has an attractor $G$ such that $G=\bigcup_{i=1}^{N}w_{i}(G)=\bigcup_{i=1}^{N}\{w_{i}(x,y,z) \; : \; (x,y,z)\in G\}$ \cite{chand-2007}. The attractor $G$ is the graph of a vector valued function $f:I\to\mathbb{R}^{2}$ such that $f(x_{i})=(y_{i}, z_{i})$ for $i=0$, $1$, $\ldots$, $N$ and $G=\{(x, f(x)): x\in I, f(x)=(y(x), z(x)) \}$. If $f=(f_{1}, f_{2})$, then the projection of the attractor $G$ on $\mathbb{R}^{2}$ is the graph of the function $f_{1}$ which satisfies $f_{1}(x_{i})=y_{i}$ and is of the form
$$f_{1}(L_{i}(x))= F_{i}^{1}(x, f_{1}(x), f_{2}(x))= \alpha_{i}f_{1}(x)+\beta_{i}f_{2}(x)+ c_{i}x+ d_{i}, \; x\in I $$
is known as CHFIF corresponding to the data $\{(x_{i},y_{i}) \in I\times\mathbb{R} \; : \; i=0, \; 1, \; \dotsc, \;  N\}$.
\subsection{Graph-directed Iterated Function Systems}
Let $G = (V, E)$ be a directed graph where $V$ denote the set of vertices and $E$ is the set of edges. For all $u, v \in V$, let $E^{uv}$ denote the
set of edges from $u$ to $v$ with elements $e_{i}^{uv}, i= 1, \; 2, \; \ldots, \;  K^{uv}$ where $K^{uv}$ denotes the number of elements of $E^{uv}$. An iterated function system realizing the graph $G$ is given by a collection of metric spaces $(X^{v}, \rho^{v}), v \in V$, and of contraction mappings  $w_{i}^{uv} : X^{v} \to X^{u}$ corresponding to the edge $e_{i}^{uv}$ in the opposite direction of $e_{i}^{uv}$. An attractor (or invariant list) for such an iterated function system is a list of nonempty compact sets $A^{u} \subset X^{u}$ such that for all $u \in V$ ,
$$ A^{u} = \bigcup_{v\in V}\bigcup_{i=1}^{K^{uv}}w_{i}^{uv}(A^{v}) \; .$$
Then $(X^{u}; w_{i}^{uv})$ is the graph directed iterated function system (GDIFS) realizing the graph $G$ \cite{edgar-2008, mauldin-88}.
\begin{example}
 One can see \cite{deniz-15, demir-10}.
\end{example}

\section{Graph Directed Coalescence FIF}

%
In this section, for a finite number of data sets,  generalized graph-directed iterated function system (GDIFS) is defined so that projection of each attractor on $\mathbb{R}^{2}$ is the graph of a CHFIF which interpolates the corresponding data set and call it as graph-directed coalescence hidden-variable fractal interpolation function. For simplicity, only two sets of data are considered. Let the two data sets as
$$
D^{1}=\{(x_{0}^{1}, y_{0}^{1}), (x_{1}^{1}, y_{1}^{1}), \ldots, (x_{N}^{1}, y_{N}^{1})\}
$$
$$
D^{2}=\{(x_{0}^{2}, y_{0}^{2}), (x_{1}^{2}, y_{1}^{2}), \ldots, (x_{M}^{2}, y_{M}^{2})\}
$$
with $N, M\geq 2$ and
\begin{equation}
\frac{x_{i}^{1}-x_{i-1}^{1}}{x_{M}^{2}-x_{0}^{2}}< 1  \; \mbox{and} \; \frac{x_{j}^{2}-x_{j-1}^{2}}{x_{N}^{1}-x_{0}^{1}}< 1   \label{deqn1}
\end{equation}
for all $i= 1, 2, \ldots, N$ and $j= 1, 2, \ldots, M$. By introducing two set of real parameters $z_{i}^{1}, z_{j}^{2}$ for $i= 1, 2, \ldots, N$ and $j= 1, 2, \ldots, M$, consider the  two generalized data set as
$$
\mathcal{D}^{1}=\{(x_{0}^{1}, y_{0}^{1}, z_{0}^{1}), (x_{1}^{1}, y_{1}^{1}, z_{1}^{1}), \ldots, (x_{N}^{1}, y_{N}^{1}, z_{N}^{1})\}
$$
$$
\mathcal{D}^{2}=\{(x_{0}^{2}, y_{0}^{2}, z_{0}^{2}), (x_{1}^{2}, y_{1}^{2}, z_{1}^{2}), \ldots, (x_{M}^{2}, y_{M}^{2}, z_{M}^{2})\}
\; .
$$
Now consider the directed graph $G=(V,E)$ with $V=\{1, 2\}$ is such that
$$
K^{11}+ K^{12}= N \; \mbox{and} \; K^{21}+ K^{22}= M \; .
$$
give a picture.\\
To construct a generalized GDIFS associated with the data $\mathcal{D}^{r}, (r=1, 2)$ and realizing the graph $G$ consider  the functions $w_{n}^{rs}:\mathbb{R}^{3}\rightarrow\mathbb{R}^{3}$ defined as
$$
w_{n}^{rs}(x,y,z)=(L_{n}^{rs}(x), F_{n}^{rs}(x,y,z)), \; n=1, 2, \ldots, K^{rs}
$$
are such that

\begin{itemize}
\item
 $\left\{\begin{array}{ll} w_{n}^{11}(x_{0}^{1},y_{0}^{1},z_{0}^{1})=(x_{n-1}^{1},y_{n-1}^{1},z_{n-1}^{1}) &   \vspace{-.2cm}\\
   & \mbox{ for } n= 1, 2, \ldots, K^{11} \vspace{-.2cm} \\
 w_{n}^{11}(x_{N}^{1},y_{N}^{1},z_{N}^{1})=(x_{n}^{1},y_{n}^{1},z_{n}^{1}) &  \end{array} \right.$
 \vspace{.2cm}
 \item
 $\left\{\begin{array}{ll} w_{n-K^{11}}^{12}(x_{0}^{2},y_{0}^{2},z_{0}^{2})=(x_{n-1}^{1},y_{n-1}^{1},z_{n-1}^{1}) &   \vspace{-.2cm}\\
   & \mbox{ for } n= K^{11}+1,  \ldots, K^{11}+K^{12}=N \vspace{-.2cm} \\
 w_{n-K^{11}}^{11}(x_{M}^{2},y_{M}^{2},z_{M}^{2})=(x_{n}^{1},y_{n}^{1},z_{n}^{1}) &  \end{array} \right.$
 \vspace{.2cm}
 \item
 $\left\{\begin{array}{ll} w_{n}^{21}(x_{0}^{1},y_{0}^{1},z_{0}^{1})=(x_{n-1}^{2},y_{n-1}^{2},z_{n-1}^{2}) &   \vspace{-.2cm}\\
   & \mbox{ for } n= 1, 2, \ldots, K^{21} \vspace{-.2cm} \\
 w_{n}^{21}(x_{N}^{1},y_{N}^{1},z_{N}^{1})=(x_{n}^{2},y_{n}^{2},z_{n}^{2}) &  \end{array} \right.$
 \vspace{.2cm}
 \item
 $\left\{\begin{array}{ll} w_{n-K^{21}}^{22}(x_{0}^{2},y_{0}^{2},z_{0}^{2})=(x_{n-1}^{2},y_{n-1}^{2},z_{n-1}^{2}) &   \vspace{-.2cm}\\
   & \mbox{ for } n= K^{21}+1,  \ldots, K^{21}+K^{22}=M \vspace{-.2cm} \\
 w_{n-K^{21}}^{22}(x_{M}^{2},y_{M}^{2},z_{M}^{2})=(x_{n}^{2},y_{n}^{2},z_{n}^{2}) &  \end{array} \right.$
\end{itemize}
\vspace{.2cm}
From each of the above conditions, the following can derive respectively.
\begin{equation}
\left\{\begin{array}{ll} a_{n}^{11}x_{0}^{1}+ b_{n}^{11}= x_{n-1}^{1} &   \\
a_{n}^{11}x_{N}^{1}+ b_{n}^{11}= x_{n}^{1} &   \\
c_{n}^{11}x_{0}^{1}+ \alpha_{n}^{11}y_{0}^{1}+ \beta_{n}^{11}z_{0}^{1}+ d_{n}^{11}= y_{n-1}^{1} &   \vspace{-.2cm}\\
   & \mbox{ for } n= 1, 2, \ldots, K^{11} \vspace{-.2cm} \\
 c_{n}^{11}x_{N}^{1}+ \alpha_{n}^{11}y_{N}^{1}+ \beta_{n}^{11}z_{N}^{1}+ d_{n}^{11}= y_{n}^{1} & \\
 e_{n}^{11}x_{0}^{1}+ \gamma_{n}^{11}z_{0}^{1}+ f_{n}^{11}= z_{n-1}^{1} & \\
e_{n}^{11}x_{N}^{1}+ \gamma_{n}^{11}z_{N}^{1}+ f_{n}^{11}= z_{n}^{1} &  \end{array} \right.
\label{eqn1}
\end{equation}
\vspace{.2cm}
\begin{equation}
\left\{\begin{array}{ll} a_{n-K^{11}}^{12}x_{0}^{2}+ b_{n-K^{11}}^{12}= x_{n-1}^{1} &   \\
a_{n-K^{11}}^{12}x_{M}^{2}+ b_{n-K^{11}}^{12}= x_{n}^{1} &   \\
c_{n-K^{11}}^{12}x_{0}^{2}+ \alpha_{n-K^{11}}^{12}y_{0}^{2}+ \beta_{n-K^{11}}^{12}z_{0}^{2}+ d_{n-K^{11}}^{12}= y_{n-1}^{1} &   \vspace{-.2cm}\\
   & \mbox{ for } n= K^{11}+1,  \ldots, N \vspace{-.2cm} \\
 c_{n-K^{11}}^{12}x_{M}^{2}+ \alpha_{n-K^{11}}^{12}y_{M}^{2}+ \beta_{n-K^{11}}^{12}z_{M}^{2}+ d_{n-K^{11}}^{12}= y_{n}^{1} & \\
 e_{n-K^{11}}^{12}x_{0}^{2}+ \gamma_{n-K^{11}}^{12}z_{0}^{2}+ f_{n-K^{11}}^{12}= z_{n-1}^{1} & \\
e_{n-K^{11}}^{12}x_{M}^{2}+ \gamma_{n-K^{11}}^{12}z_{M}^{2}+ f_{n-K^{11}}^{12}= z_{n}^{1} &  \end{array} \right.
\label{eqn2}
\end{equation}
\vspace{.2cm}
\begin{equation}
\left\{\begin{array}{ll} a_{n}^{21}x_{0}^{1}+ b_{n}^{21}= x_{n-1}^{2} &   \\
a_{n}^{21}x_{N}^{1}+ b_{n}^{21}= x_{n}^{2} &   \\
c_{n}^{21}x_{0}^{1}+ \alpha_{n}^{21}y_{0}^{1}+ \beta_{n}^{21}z_{0}^{1}+ d_{n}^{21}= y_{n-1}^{2} &   \vspace{-.2cm}\\
   & \mbox{ for } n= 1, 2, \ldots, K^{21} \vspace{-.2cm} \\
 c_{n}^{21}x_{N}^{1}+ \alpha_{n}^{21}y_{N}^{1}+ \beta_{n}^{21}z_{N}^{1}+ d_{n}^{21}= y_{n}^{2} & \\
 e_{n}^{21}x_{0}^{1}+ \gamma_{n}^{21}z_{0}^{1}+ f_{n}^{21}= z_{n-1}^{2} & \\
e_{n}^{21}x_{N}^{1}+ \gamma_{n}^{21}z_{N}^{1}+ f_{n}^{21}= z_{n}^{2} &  \end{array} \right.
\label{eqn3}
\end{equation}
\vspace{.2cm}
\begin{equation}
\left\{\begin{array}{ll} a_{n-K^{21}}^{22}x_{0}^{2}+ b_{n-K^{21}}^{22}= x_{n-1}^{2} &   \\
a_{n-K^{21}}^{22}x_{M}^{2}+ b_{n-K^{21}}^{22}= x_{n}^{2} &   \\
c_{n-K^{21}}^{22}x_{0}^{2}+ \alpha_{n-K^{21}}^{22}y_{0}^{2}+ \beta_{n-K^{21}}^{22}z_{0}^{2}+ d_{n-K^{21}}^{22}= y_{n-1}^{2} &   \vspace{-.2cm}\\
   & \mbox{ for } n= K^{21}+1,  \ldots, M \vspace{-.2cm} \\
 c_{n-K^{21}}^{22}x_{M}^{2}+ \alpha_{n-K^{21}}^{22}y_{M}^{2}+ \beta_{n-K^{21}}^{22}z_{M}^{2}+ d_{n-K^{21}}^{22}= y_{n}^{2} & \\
 e_{n-K^{21}}^{22}x_{0}^{2}+ \gamma_{n-K^{21}}^{22}z_{0}^{2}+ f_{n-K^{21}}^{22}= z_{n-1}^{2} & \\
e_{n-K^{21}}^{22}x_{M}^{2}+ \gamma_{n-K^{21}}^{22}z_{M}^{2}+ f_{n-K^{21}}^{22}= z_{n}^{2} &  \end{array} \right.
\label{eqn4}
\end{equation}
From the linear system of equations (\ref{eqn1}), (\ref{eqn2}), (\ref{eqn3}) and (\ref{eqn4}) the constants $a_{i}^{rs}$, $b_{i}^{rs}$, $c_{i}^{rs}$, $d_{i}^{rs}$, $e_{i}^{rs}$ and $f_{i}^{rs}$ for $r, s\in \{1, 2\}$, $i= 1, 2, \ldots, K^{rs}$ are determined as follows
\begin{center}
\begin{tabular}{cc}
$a_{n}^{11}=\frac{x_{n}^{1}-x_{n-1}^{1}}{x_{N}^{1}-x_{0}^{1}}$  & $a_{n}^{12}=\frac{x_{n}^{1}-x_{n-1}^{1}}{x_{M}^{2}-x_{0}^{2}}$\\
$b_{n}^{11}=\frac{x_{N}^{1}x_{n-1}^{1}-x_{0}^{1}x_{n}^{1}}{x_{N}^{1}-x_{0}^{1}}$  & $b_{n}^{12}=\frac{x_{M}^{2}x_{n-1}^{1}-x_{0}^{2}x_{n}^{1}}{x_{M}^{2}-x_{0}^{2}}$\\
$c_{n}^{11}=\frac{y_{n}^{1}-y_{n-1}^{1}-\alpha_{n}^{11}(y_{N}^{1}-y_{0}^{1})-\beta_{n}^{11}(z_{N}^{1}-z_{0}^{1})}{x_{N}^{1}-x_{0}^{1}}$ &
$c_{n}^{12}=\frac{y_{n}^{1}-y_{n-1}^{1}-\alpha_{n}^{12}(y_{M}^{2}-y_{0}^{2})-\beta_{n}^{12}(z_{M}^{2}-z_{0}^{2})}{x_{M}^{2}-x_{0}^{2}}$\\
$d_{n}^{11}=\frac{x_{N}^{1}y_{n-1}^{1}-x_{0}^{1}y_{n}^{1}-\alpha_{n}^{11}(x_{N}^{1}y_{0}^{1}-x_{0}^{1}y_{N}^{1})-\beta_{n}^{11}(x_{N}^{1}z_{0}^{1}-x_{0}^{1}z_{N}^{1})}{x_{N}^{1}-x_{0}^{1}}$ & $d_{n}^{12}=\frac{x_{M}^{2}y_{n-1}^{1}-x_{0}^{2}y_{n}^{1}-\alpha_{n}^{12}(x_{M}^{2}y_{0}^{2}-x_{0}^{2}y_{M}^{2})-\beta_{n}^{12}(x_{M}^{2}z_{0}^{2}-x_{0}^{2}z_{M}^{2})}{x_{M}^{2}-x_{0}^{2}}$\\
$e_{n}^{11}=\frac{z_{n}^{1}-z_{n-1}^{1}-\gamma_{n}^{11}(z_{N}^{1}-z_{0}^{1})}{x_{N}^{1}-x_{0}^{1}}$ &
$e_{n}^{12}=\frac{z_{n}^{1}-z_{n-1}^{1}-\gamma_{n}^{12}(z_{M}^{2}-z_{0}^{2})}{x_{M}^{2}-x_{0}^{2}}$\\
$f_{n}^{11}=\frac{x_{N}^{1}z_{n-1}^{1}-x_{0}^{1}z_{n}^{1}-\gamma_{n}^{11}(x_{N}^{1}z_{0}^{1}-x_{0}^{1}z_{N}^{1})}{x_{N}^{1}-x_{0}^{1}}$ &
$f_{n}^{12}=\frac{x_{M}^{2}z_{n-1}^{1}-x_{0}^{2}z_{n}^{1}-\gamma_{n}^{12}(x_{M}^{2}z_{0}^{2}-x_{0}^{2}z_{M}^{2})}{x_{M}^{2}-x_{0}^{2}}$
\end{tabular}
\end{center}
\begin{center}
\begin{tabular}{cc}
$a_{n}^{21}=\frac{x_{n}^{2}-x_{n-1}^{2}}{x_{N}^{1}-x_{0}^{1}}$  & $a_{n}^{22}=\frac{x_{n}^{2}-x_{n-1}^{2}}{x_{M}^{2}-x_{0}^{2}}$\\
$b_{n}^{21}=\frac{x_{N}^{1}x_{n-1}^{2}-x_{0}^{1}x_{n}^{2}}{x_{N}^{1}-x_{0}^{1}}$  & $b_{n}^{22}=\frac{x_{M}^{2}x_{n-1}^{2}-x_{0}^{2}x_{n}^{2}}{x_{M}^{2}-x_{0}^{2}}$\\
$c_{n}^{21}=\frac{y_{n}^{2}-y_{n-1}^{2}-\alpha_{n}^{21}(y_{N}^{1}-y_{0}^{1})-\beta_{n}^{21}(z_{N}^{1}-z_{0}^{1})}{x_{N}^{1}-x_{0}^{1}}$ &
$c_{n}^{22}=\frac{y_{n}^{2}-y_{n-1}^{2}-\alpha_{n}^{22}(y_{M}^{2}-y_{0}^{2})-\beta_{n}^{22}(z_{M}^{2}-z_{0}^{2})}{x_{M}^{2}-x_{0}^{2}}$\\
$d_{n}^{21}=\frac{x_{N}^{1}y_{n-1}^{2}-x_{0}^{1}y_{n}^{2}-\alpha_{n}^{21}(x_{N}^{1}y_{0}^{1}-x_{0}^{1}y_{N}^{1})-\beta_{n}^{21}(x_{N}^{1}z_{0}^{1}-x_{0}^{1}z_{N}^{1})}{x_{N}^{1}-x_{0}^{1}}$ & $d_{n}^{22}=\frac{x_{M}^{2}y_{n-1}^{2}-x_{0}^{2}y_{n}^{2}-\alpha_{n}^{22}(x_{M}^{2}y_{0}^{2}-x_{0}^{2}y_{M}^{2})-\beta_{n}^{22}(x_{M}^{2}z_{0}^{2}-x_{0}^{2}z_{M}^{2})}{x_{M}^{2}-x_{0}^{2}}$\\
$e_{n}^{21}=\frac{z_{n}^{2}-z_{n-1}^{2}-\gamma_{n}^{21}(z_{N}^{1}-z_{0}^{1})}{x_{N}^{1}-x_{0}^{1}}$ &
$e_{n}^{22}=\frac{z_{n}^{2}-z_{n-1}^{2}-\gamma_{n}^{22}(z_{M}^{2}-z_{0}^{2})}{x_{M}^{2}-x_{0}^{2}}$\\
$f_{n}^{21}=\frac{x_{N}^{1}z_{n-1}^{2}-x_{0}^{1}z_{n}^{2}-\gamma_{n}^{21}(x_{N}^{1}z_{0}^{1}-x_{0}^{1}z_{N}^{1})}{x_{N}^{1}-x_{0}^{1}}$ &
$f_{n}^{22}=\frac{x_{M}^{2}z_{n-1}^{2}-x_{0}^{2}z_{n}^{2}-\gamma_{n}^{22}(x_{M}^{2}z_{0}^{2}-x_{0}^{2}z_{M}^{2})}{x_{M}^{2}-x_{0}^{2}}$
\end{tabular}
\end{center}
The following theorem shows that each maps $w_{n}^{rs}$ is contraction with respect to metric equivalent to the Euclidean metric and ensures the existence  of attractors of generalized GDIFS.
\begin{thm}\label{thm1}
Let $\{\mathbb{R}^{3}; w_{n}^{rs}, n=1, 2, \ldots, K^{rs} \}$ be the generalized GDIFS defined above realizing the graph and associated with the data sets $\mathcal{D}^{r}, (r= 1, 2)$  which satisfy  (\ref{deqn1}). If $|\alpha_{n}^{rs}|< 1, |\gamma_{n}^{rs}|< 1$ and $\beta_{n}^{rs}$ is chosen such that $|\beta_{n}^{rs}|+|\gamma_{n}^{rs}|< 1$ for all $r, s \in\{1, 2\}$ and $n=1, 2, \ldots, K^{rs}$. Then there exists a metric $\delta$ on $\mathbb{R}^{3}$ equivalent to the Euclidean metric, such that the GDIFS is hyperbolic with respect to $\delta$. In particular, there exists   non empty compact sets $G^{r}$ such that
$$ G^{r}=\bigcup_{s=1}^{2}\bigcup_{n=1}^{K^{rs}}w_{n}^{rs}(G^{s}) \; .$$
\end{thm}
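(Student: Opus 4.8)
The plan is to equip $\mathbb{R}^{3}$ (the state space attached to each of the two vertices) with a single weighted $\ell^{1}$ metric
\[
\delta\big((x,y,z),(x',y',z')\big)=|x-x'|+\theta\,|y-y'|+\phi\,|z-z'|,\qquad \theta,\phi>0,
\]
and to fix the weights $\theta,\phi$ so that every map $w_{n}^{rs}$ is a $\delta$-contraction with one common factor $\kappa<1$; the invariant list $(G^{1},G^{2})$ is then produced by the Banach fixed point theorem applied to the graph-directed Hutchinson operator. Since $\theta,\phi$ are positive constants, $\delta$ is automatically bi-Lipschitz equivalent to the Euclidean metric on $\mathbb{R}^{3}$, so the whole task reduces to arranging the contraction.

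First I would record the uniform constants. From \eqref{deqn1} together with $N,M\ge 2$ one checks $|a_{n}^{rs}|<1$ for all admissible $r,s,n$, hence $a:=\max_{r,s,n}|a_{n}^{rs}|<1$; likewise set $A:=\max|\alpha_{n}^{rs}|<1$, $\mu:=\max\big(|\beta_{n}^{rs}|+|\gamma_{n}^{rs}|\big)<1$, $C:=\max|c_{n}^{rs}|$ and $E:=\max|e_{n}^{rs}|$. Because $w_{n}^{rs}(x,y,z)=\big(a_{n}^{rs}x+b_{n}^{rs},\,c_{n}^{rs}x+\alpha_{n}^{rs}y+\beta_{n}^{rs}z+d_{n}^{rs},\,e_{n}^{rs}x+\gamma_{n}^{rs}z+f_{n}^{rs}\big)$ is affine with a ``triangular'' dependence ($x$ feeds $z$, and $x,z$ feed $y$), a direct estimate gives, for $P=(x,y,z)$ and $P'=(x',y',z')$,
\[
\delta\big(w_{n}^{rs}(P),w_{n}^{rs}(P')\big)\le\big(a+\theta C+\phi E\big)|x-x'|+\theta|\alpha_{n}^{rs}|\,|y-y'|+\big(\theta|\beta_{n}^{rs}|+\phi|\gamma_{n}^{rs}|\big)|z-z'|.
\]
Thus it suffices to find $\kappa<1$ and $\theta,\phi>0$ such that, for all $r,s,n$: (i) $a+\theta C+\phi E\le\kappa$; (ii) $|\alpha_{n}^{rs}|\le\kappa$; (iii) $\theta|\beta_{n}^{rs}|+\phi|\gamma_{n}^{rs}|\le\kappa\phi$.

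Taking $\theta=\phi$ turns (iii) into $|\beta_{n}^{rs}|+|\gamma_{n}^{rs}|\le\kappa$, valid once $\kappa\ge\mu$, while (ii) is valid once $\kappa\ge A$. So I would fix any $\kappa$ with $\max\{a,A,\mu\}<\kappa<1$, which makes $\kappa-a>0$, and then pick $\theta=\phi$ small enough that $\phi\,(C+E)\le\kappa-a$ (any $\phi>0$ works if $C=E=0$), which secures (i). With these choices $\delta\big(w_{n}^{rs}(P),w_{n}^{rs}(P')\big)\le\kappa\,\delta(P,P')$ for every $r,s,n$, i.e.\ the GDIFS is hyperbolic with respect to $\delta$.

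It then remains to invoke the fixed point theorem for GDIFS \cite{edgar-2008,mauldin-88}. On the complete metric space $\mathcal{H}(\mathbb{R}^{3})\times\mathcal{H}(\mathbb{R}^{3})$, with the Hausdorff metrics induced by $\delta$ combined by the maximum, define $\mathcal{W}(B^{1},B^{2})=\big(\bigcup_{s=1}^{2}\bigcup_{n=1}^{K^{1s}}w_{n}^{1s}(B^{s}),\ \bigcup_{s=1}^{2}\bigcup_{n=1}^{K^{2s}}w_{n}^{2s}(B^{s})\big)$. The uniform $\kappa$-contractivity of the $w_{n}^{rs}$ makes $\mathcal{W}$ a $\kappa$-contraction, so it has a unique fixed point $(G^{1},G^{2})$ consisting of nonempty compact sets, and this fixed point is precisely the list $G^{r}=\bigcup_{s=1}^{2}\bigcup_{n=1}^{K^{rs}}w_{n}^{rs}(G^{s})$. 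The one genuinely delicate point is forcing a single $\kappa<1$ to dominate all three coordinate directions at once: the cross terms $\theta C$ and $\phi E$ pushing the $x$-direction into the $y$- and $z$-directions are killed by shrinking $\theta=\phi$, whereas the cross term $\theta|\beta_{n}^{rs}|$ pushing the $z$-direction into the $y$-direction cannot be removed by rescaling alone — keeping it under control is exactly what the hypothesis $|\beta_{n}^{rs}|+|\gamma_{n}^{rs}|<1$ is for.
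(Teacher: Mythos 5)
Your proposal is correct and is essentially the argument the paper relies on: the paper's proof is a one-line deferral to Theorem 2.1.1 of \cite{chand-04}, which is precisely this construction of a weighted $\ell^{1}$-type metric $|x-x'|+\theta|y-y'|+\phi|z-z'|$ with the weights shrunk to absorb the cross terms $c_{n}^{rs}x$ and $e_{n}^{rs}x$, the hypothesis $|\beta_{n}^{rs}|+|\gamma_{n}^{rs}|<1$ controlling the $z$-into-$y$ coupling, and condition (\ref{deqn1}) guaranteeing $\max_{r,s,n}|a_{n}^{rs}|<1$, followed by the graph-directed Hutchinson fixed point theorem. You have in fact supplied the details the paper omits, and your verification of conditions (i)--(iii) is sound.
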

\begin{proof}
Proof follows in the similar line of Theorem 2.1.1, \cite{chand-04} and using above condition (\ref{deqn1}).
\end{proof}
Following is the main result regarding existence of coalescence Hidden-variable FIFs for generalized GDIFS.
\begin{thm}
Let $G^{r}, r\in V$ be the attractors of the generalized GDIFS as in Theorem~ \ref{thm1}. Then $G^{r}, r\in V$ is the graph of a vector valued continuous function $f^{r}: I^{r}\to \mathbb{R}^{2}$ such that for $r\in V$, $f^{r}(x_{n}^{r})= (y_{n}^{r}, z_{n}^{r})$ for all $n=1, 2, \ldots, N^{r}$. If $f^{r}= (f_{1}^{r}, f_{2}^{r})$ then the projection of the attractors $G^{r}, r\in V$ on $\mathbb{R}^{2}$ is the graph of the continuous function $f_{1}^{r}: I^{r}\to \mathbb{R}$ known as CHFIF such that for $r\in V$, $f^{r}(x_{n}^{r})= (y_{n}^{r}) $. That is $G^{r}|_{\mathbb{R}^{2}}=\{(x, f_{1}^{r}(x)): x\in I^{r} \}$
\end{thm}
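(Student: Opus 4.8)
The plan is to obtain each $f^r$ as the unique fixed point of a Read--Bajraktarević type operator on a complete metric space of lists of continuous functions, and then to identify the graph of $f^r$ with the attractor $G^r$ by appealing to the uniqueness of the invariant list established in Theorem~\ref{thm1}.

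First I would introduce the space $\mathcal{F}$ of pairs $(g^1,g^2)$ with $g^r\in C(I^r,\mathbb{R}^2)$ satisfying the boundary conditions $g^r(x_0^r)=(y_0^r,z_0^r)$ and $g^r(x_{N^r}^r)=(y_{N^r}^r,z_{N^r}^r)$, metrized by $\rho\big((g^1,g^2),(h^1,h^2)\big)=\max_{r}\sup_{x\in I^r}\|g^r(x)-h^r(x)\|_1$ where $\|(u,v)\|_1=|u|+|v|$; being a closed subset of a finite product of spaces of continuous functions, $\mathcal{F}$ is complete. On $\mathcal{F}$ define $T$ by setting, for $x$ in the subinterval $L_n^{rs}(I^s)$ of $I^r$,
\[
(Tg)^r(x)=F_n^{rs}\big((L_n^{rs})^{-1}(x),\,g^s((L_n^{rs})^{-1}(x))\big).
\]
That the maps $L_n^{rs}$ send $I^s$ onto consecutive subintervals exhausting $I^r$ is exactly the content of the first two lines of each of \eqref{eqn1}--\eqref{eqn4}; and at a shared interpolation point $x_n^r$ the two candidate values of $(Tg)^r$ are, by the join-up conditions together with the prescribed boundary values of $g^s$, both equal to $(y_n^r,z_n^r)$. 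Hence $(Tg)^r$ is well defined and continuous on $I^r$, and the join-up conditions at $x_0^r,x_{N^r}^r$ give $Tg\in\mathcal{F}$.

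Next I would check that $T$ is a contraction. Since $F_n^{rs}(x,y,z)=(\alpha_n^{rs}y+\beta_n^{rs}z+c_n^{rs}x+d_n^{rs},\,\gamma_n^{rs}z+e_n^{rs}x+f_n^{rs})$, for a fixed first coordinate one has $\big\|F_n^{rs}(x,y_1,z_1)-F_n^{rs}(x,y_2,z_2)\big\|_1\le|\alpha_n^{rs}|\,|y_1-y_2|+\big(|\beta_n^{rs}|+|\gamma_n^{rs}|\big)\,|z_1-z_2|$, so with $c=\max_{r,s,n}\{|\alpha_n^{rs}|,\,|\beta_n^{rs}|+|\gamma_n^{rs}|\}<1$ one gets $\rho(Tg,Th)\le c\,\rho(g,h)$ directly. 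The Banach fixed point theorem then yields a unique $(f^1,f^2)\in\mathcal{F}$ with $f=Tf$; writing $f^r=(f_1^r,f_2^r)$, both coordinates are continuous. Evaluating the fixed point identity $f^r(L_n^{rs}(x))=F_n^{rs}(x,f^s(x))$ at $x=x_0^s$ and $x=x_{N^s}^s$, the boundary values of $f^s$ and the join-up conditions propagate to give $f^r(x_n^r)=(y_n^r,z_n^r)$ at every node, while the first component of the same identity reads $f_1^r(L_n^{rs}(x))=\alpha_n^{rs}f_1^s(x)+\beta_n^{rs}f_2^s(x)+c_n^{rs}x+d_n^{rs}$, which is precisely the functional equation defining the graph-directed CHFIF for $D^r$.

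Finally I would identify the graph with the attractor. Put $\Gamma^r=\{(x,f^r(x)):x\in I^r\}$, a nonempty compact set. Replacing $x$ by $L_n^{rs}(\tilde x)$ in the fixed point identity gives $(L_n^{rs}(\tilde x),f^r(L_n^{rs}(\tilde x)))=w_n^{rs}(\tilde x,f^s(\tilde x))$, and taking the union over $s$ and $n$, using $\bigcup_{s,n}L_n^{rs}(I^s)=I^r$, produces $\Gamma^r=\bigcup_{s=1}^{2}\bigcup_{n=1}^{K^{rs}}w_n^{rs}(\Gamma^s)$. Since the invariant list of the GDIFS is unique by Theorem~\ref{thm1}, $\Gamma^r=G^r$, and applying the coordinate projection $(x,y,z)\mapsto(x,y)$ gives $G^r|_{\mathbb{R}^2}=\{(x,f_1^r(x)):x\in I^r\}$. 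I expect the only genuinely delicate points to be the well-definedness and continuity of $T$ across the shared interpolation nodes and the verification that $T(\mathcal{F})\subseteq\mathcal{F}$; these are bookkeeping with the join-up conditions rather than a real obstruction, the contraction estimate being immediate once the $\ell^1$ norm is used so that the hypothesis $|\beta_n^{rs}|+|\gamma_n^{rs}|<1$ enters directly.
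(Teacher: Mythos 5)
Your proposal is correct and follows essentially the same route as the paper: a Read--Bajraktarevi\'c type operator on the product of the two spaces of boundary-value-constrained continuous $\mathbb{R}^{2}$-valued functions, well-definedness and continuity across the shared nodes via the join-up conditions, the Banach fixed point theorem, and identification of the resulting graphs with the invariant list through the uniqueness of the attractor from Theorem~\ref{thm1}. The one substantive difference is the contraction step: by fixing the $\ell^{1}$ norm on $\mathbb{R}^{2}$ you get the Lipschitz constant $\max_{r,s,n}\{|\alpha_{n}^{rs}|,\,|\beta_{n}^{rs}|+|\gamma_{n}^{rs}|\}$, which is precisely where the hypothesis $|\beta_{n}^{rs}|+|\gamma_{n}^{rs}|<1$ enters, whereas the paper leaves the norm unspecified and asserts the constant $\max\{|\alpha_{n}^{rs}|,|\beta_{n}^{rs}|,|\gamma_{n}^{rs}|\}$, which does not in general bound $|\alpha\,\Delta y+\beta\,\Delta z|+|\gamma\,\Delta z|$ by that multiple of $|\Delta y|+|\Delta z|$ (take $\alpha=0$, $\beta=\gamma=1/2$, $\Delta y=0$), so your version of this estimate is the more careful and correct one.
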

\begin{proof}
Consider the vector valued function spaces
$$
\mathcal{F}=\{f:[x_{0}^{1}, x_{N}^{1}]\rightarrow \mathbb{R}^{2} \; \mbox{continuous such that} \; f(x_{0}^{1})= (y_{0}^{1}, z_{0}^{1}), f(x_{N}^{1})= (y_{N}^{1}, z_{N}^{1}) \}
$$
$$
\mathcal{H}=\{h:[x_{0}^{2}, x_{M}^{2}]\rightarrow \mathbb{R}^{2} \; \mbox{continuous such that} \; h(x_{0}^{2})= (y_{0}^{2}, z_{0}^{2}), h(x_{M}^{2})= (y_{M}^{2}, z_{M}^{2}) \}
$$
with metrics
$$
d_{\mathcal{F}}(f_{1}, f_{2})= \sup_{x\in [x_{0}^{1}, x_{N}^{1}]} \|f_{1}(x)-f_{2}(x)\|
$$
$$
d_{\mathcal{H}}(h_{1}, h_{2})= \sup_{x\in [x_{0}^{2}, x_{M}^{2}]} \|h_{1}(x)- h_{2}(x)\|
$$
respectively, where $\|.\|$ denotes a norm on $\mathbb{R}^{2}$. Since $(\mathcal{F}, d_{\mathcal{F}})$ and $(\mathcal{H}, d_{\mathcal{H}})$ are complete metric spaces, then $(\mathcal{F}\times \mathcal{H}, d)$ is also a complete metric space where
$$
d((f_{1}, h_{1}), (f_{2}, h_{2}))= \max\{d_{\mathcal{F}}(f_{1}, f_{2}), d_{\mathcal{H}}(h_{1}, h_{2})\}
\; .
$$
Following are the affine maps.
$$I: [x_{0}^{1}, x_{N}^{1}]\to [x_{n-1}^{1}, x_{n}^{1}], I_{n}^{1}(x)= a_{n}^{11}x+ b_{n}^{11} \; \mbox{for} \;  n=1, 2, \ldots, K^{11}$$
$$I: [x_{0}^{2}, x_{M}^{2}]\to [x_{n-1}^{1}, x_{n}^{1}], I_{n}^{1}(x)= a_{n-K^{11}}^{12}x+ b_{n-K^{11}}^{12} \;  \mbox{for} \; n=K^{11}+1,  \ldots, K^{12}$$
$$J: [x_{0}^{1}, x_{N}^{1}]\to [x_{n-1}^{2}, x_{n}^{2}], I_{n}^{2}(x)= a_{n}^{21}x+ b_{n}^{21} \; \mbox{for} \;  n=1, 2, \ldots, K^{21}$$
$$J: [x_{0}^{2}, x_{M}^{2}]\to [x_{n-1}^{2}, x_{n}^{2}], I_{n}^{2}(x)= a_{n-K^{21}}^{22}x+ b_{n-K^{21}}^{22} \; \mbox{for} \; n=K^{21}+1,  \ldots, K^{22}$$
Now define the mapping
$$T: \mathcal{F}\times\mathcal{H}\to\mathcal{F}\times\mathcal{H}$$
$$T(f, h)(x, y)= (\widetilde{f}(x), \widetilde{h}(y))$$
where for $x\in[x_{n-1}^{1}, x_{n}^{1}]$\\
\begin{multline}
\widetilde{f}(x)= \bigg\{ \begin{matrix}{}
(c_{n}^{11}I_{n}^{-1}(x) + \alpha_{n}^{11}y_{f}^{1}(I_{n}^{-1}(x))+ \beta_{n}^{11}z_{f}^{1}(I_{n}^{-1}(x))+ d_{n}^{11}, &&\\
\gamma_{n}^{11}z_{f}^{1}(I_{n}^{-1}(x))+ e_{n}^{11}I_{n}^{-1}(x)+ f_{n}^{11}) &  \mathrm{for} \; n=1, 2, \ldots, K^{11}\\
(c_{n-K^{11}}^{12}I_{n}^{-1}(x) + \alpha_{n-K^{11}}^{12}y_{h}^{2}(I_{n}^{-1}(x))+ \beta_{n-K^{11}}^{12}z_{h}^{2}(I_{n}^{-1}(x))+ d_{n-K^{11}}^{12}, &&\\ \gamma_{n-K^{11}}^{12}z_{h}^{2}(I_{n}^{-1}(x))+ e_{n-K^{11}}^{12}I_{n}^{-1}(x)+ f_{n-K^{11}}^{12}) & \mathrm{for} \; n=K^{11}+ 1,  \ldots, N \; .
\end{matrix}
\end{multline}
and for $x\in[x_{m-1}^{2}, x_{m}^{2}]$\\
\begin{multline}
\widetilde{h}(x)= \bigg\{ \begin{matrix}{}
(c_{m}^{21}J_{n}^{-1}(x) + \alpha_{m}^{21}y_{f}^{1}(J_{m}^{-1}(x))+ \beta_{m}^{21}z_{f}^{1}(J_{m}^{-1}(x))+ d_{m}^{21}, &&\\
\gamma_{m}^{21}z_{f}^{1}(I_{m}^{-1}(x))+ e_{m}^{21}J_{m}^{-1}(x)+ f_{m}^{21}) & \mathrm{for} \; m=1, 2, \ldots, K^{21}\\
(c_{m-K^{21}}^{22}J_{m}^{-1}(x) + \alpha_{m-K^{21}}^{22}y_{h}^{2}(J_{m}^{-1}(x))+ \beta_{m-K^{21}}^{22}z_{h}^{2}(J_{m}^{-1}(x))+ d_{m-K^{21}}^{22}, &&\\ \gamma_{m-K^{21}}^{22}z_{h}^{2}(J_{m}^{-1}(x))+ e_{m-K^{21}}^{22}I_{m}^{-1}(x)+ f_{m-K^{21}}^{22}) & \mathrm{for} \; m=K^{21}+ 1,  \ldots, M \; .
\end{matrix}
\end{multline}
Now using equations $(\ref{eqn1})- (\ref{eqn4})$ it is clear that,
$$\widetilde{f}(x_{0}^{1})= F_{1}(I_{n}^{-1}(x), y_{f}^{1}(I_{n}^{-1}(x)), z_{f}^{1}(I_{n}^{-1}(x)))= (y_{0}^{1}, z_{0}^{1})$$
$$\widetilde{f}(x_{N}^{1})= F_{N}(I_{n}^{-1}(x), y_{h}^{2}(I_{n}^{-1}(x)), z_{h}^{2}(I_{n}^{-1}(x)))= (y_{N}^{1}, z_{N}^{1}) \; .$$
Similarly, $\widetilde{h}(x_{0}^{2})= (y_{0}^{2}, z_{0}^{2}) $, $\widetilde{h}(x_{M}^{2})= (y_{M}^{2}, z_{M}^{2}) $. Which proves that $T$ maps $\mathcal{F}\times\mathcal{H}$ into itself. Since for each $n=1, 2, \ldots, N$, $I_{n}^{-1}(x)$ is continuous and therefore, $\widetilde{f}$ is continuous on each subintervals $[x_{n-1}^{1}, x_{n}^{1}]$.\\
For $n=1, 2, \ldots, K^{11}$, using (\ref{eqn1}) it follows that $\widetilde{f}(x_{n}^{1-})=\widetilde{f}(x_{n}^{1+})= (y_{n}^{1}, z_{n}^{1})$.\\
For $n=K^{11}+1,  \ldots, N-1$, using (\ref{eqn2}) it follows that $\widetilde{f}(x_{n}^{1-})=\widetilde{f}(x_{n}^{1+})= (y_{n}^{1}, z_{n}^{1})$.\\
For $n=K^{11}$, using (\ref{eqn1}) and (\ref{eqn2}) it follows that $\widetilde{f}(x_{n}^{1-})=\widetilde{f}(x_{n}^{1+})= (y_{n}^{1}, z_{n}^{1})$ since $I_{n}^{-1}(x_{n}^{1})= x_{N}^{1}$ and $I_{n+1}^{-1}(x_{n}^{1})= x_{0}^{2}$.\\
Hence $\widetilde{f}$ is continuous on $I$. Similarly it can be shown that $\widetilde{h}$ is continuous on $J$. Consequently $T$ is continuous. \\
To show that $T$ is a contraction map on $\mathcal{F}\times\mathcal{H}$, let $T(f_{1}, f_{2})=(\widetilde{f}_{1}, \widetilde{f}_{2})$ and $T(h_{1}, h_{2})=(\widetilde{h}_{1}, \widetilde{h}_{2})$. Now
\begin{align*}
\sup_{x\in[x_{0}^{1}, x_{K^{11}}^{1}]}\{\|\widetilde{f}_{1}(x)-\widetilde{f}_{2}(x)\|\}  &=
\max_{\substack{n=1, 2, \ldots, K^{11} \\ x\in[x_{n-1}^{1}, x_{n}^{1}]}}\{\|\alpha_{n}^{11}(y_{f_{1}}^{1}(I_{n}^{-1}(x))- y_{f_{2}}^{1}(I_{n}^{-1}(x)))\\ &+ \beta_{n}^{11}(z_{f_{1}}^{1}(I_{n}^{-1}(x))- z_{f_{2}}^{1}(I_{n}^{-1}(x))), \gamma_{n}^{11}(z_{f_{1}}^{1}(I_{n}^{-1}(x))- z_{f_{2}}^{1}(I_{n}^{-1}(x)))\|  \}\\
&\leq \delta^{11}\max_{\substack{n=1, 2, \ldots, K^{11} \\ x\in[x_{n-1}^{1}, x_{n}^{1}]}}\{y_{f_{1}}^{1}(I_{n}^{-1}(x))- y_{f_{2}}^{1}(I_{n}^{-1}(x))+ z_{f_{1}}^{1}(I_{n}^{-1}(x))- z_{f_{2}}^{1}(I_{n}^{-1}(x)), \\
& z_{f_{1}}^{1}(I_{n}^{-1}(x))- z_{f_{2}}^{1}(I_{n}^{-1}(x))\}\\
&\leq \delta^{11} d_{\mathcal{F}}(f_{1}, f_{2}) .
\end{align*}
\begin{align*}
\sup_{x\in[x_{K^{11}}^{1}, x_{N}^{1}]}\{\|\widetilde{f}_{1}(x)-\widetilde{f}_{2}(x)\|\}  &=
\max_{\substack{n=K^{11}+1, \ldots, N \\ x\in[x_{n-1}^{1}, x_{n}^{1}]}}\{\|\alpha_{n-K^{11}}^{12}(y_{h_{1}}^{2}(I_{n}^{-1}(x))- y_{h_{2}}^{2}(I_{n}^{-1}(x)))\\ &+ \beta_{n-K^{11}}^{12}(z_{h_{1}}^{2}(I_{n}^{-1}(x))- z_{h_{2}}^{2}(I_{n}^{-1}(x))), \gamma_{n-K^{11}}^{12}(z_{h_{1}}^{2}(I_{n}^{-1}(x))- z_{h_{2}}^{2}(I_{n}^{-1}(x)))\|  \}\\
&\leq \delta^{12}\max_{\substack{n=K^{11}+1, \ldots, N \\ x\in[x_{n-1}^{1}, x_{n}^{1}]}}\{y_{h_{1}}^{2}(I_{n}^{-1}(x))- y_{h_{2}}^{2}(I_{n}^{-1}(x))+ z_{h_{1}}^{2}(I_{n}^{-1}(x))- z_{h_{2}}^{2}(I_{n}^{-1}(x)), \\
& z_{h_{1}}^{2}(I_{n}^{-1}(x))- z_{h_{2}}^{2}(I_{n}^{-1}(x))\}\\
&\leq \delta^{12} d_{\mathcal{H}}(h_{1}, h_{2}) .
\end{align*}
where $\delta^{11}= \max_{n=1, 2, \ldots, K^{11}} \{|\alpha_{n}^{11}|, |\beta_{n}^{11}|, |\gamma_{n}^{11}|\} < 1$ and $\delta^{12}= \max_{n=K^{11}+1, \ldots, N} \{|\alpha_{n}^{12}|, |\beta_{n}^{12}|, |\gamma_{n}^{12}|\} < 1$. Therefore
$$
d_{\mathcal{F}}(\widetilde{f}_{1}, \widetilde{f}_{2})\leq \max\{\delta^{11}, \delta^{12}\}\max\{d_{\mathcal{F}}(f_{1}, f_{2}), d_{\mathcal{H}}(h_{1}, h_{2})\}
\; .
$$
Similarly, one can have
$$
d_{\mathcal{H}}(\widetilde{h}_{1}, \widetilde{h}_{2})\leq \max\{\delta^{21}, \delta^{22}\}\max\{d_{\mathcal{F}}(f_{1}, f_{2}), d_{\mathcal{H}}(h_{1}, h_{2})\}
\; .
$$
where $\delta^{21}= \max_{n=1, 2, \ldots, K^{21}} \{|\alpha_{n}^{21}|, |\beta_{n}^{21}|, |\gamma_{n}^{21}|\} < 1$ and $\delta^{22}= \max_{n=K^{21}+1, \ldots, M} \{|\alpha_{n}^{22}|, |\beta_{n}^{22}|, |\gamma_{n}^{22}|\} < 1$. Hence
$$d(T(f_{1}, h_{1}), T(f_{2}, h_{2}))= \max\{d_{\mathcal{F}}(\widetilde{f}_{1}, \widetilde{f}_{2}), d_{\mathcal{H}}(\widetilde{h}_{1}, \widetilde{h}_{2})\}\leq \delta \max\{d_{\mathcal{F}}(f_{1}, f_{2}), d_{\mathcal{H}}(h_{1}, h_{2})\}$$
where $\delta= \max\{\delta^{11}, \delta^{12}, \delta^{21}, \delta^{22}\}< 1.$ Which proves that $T$ is contraction mapping. Then by Banach fixed point theorem, $T$ posses a unique fixed point, say $(f_{0}, h_{0})$. \\
Now, for $ n=1, 2, \ldots, K^{11}$
\begin{align*}
f_{0}(x_{n}^{1}) &= (c_{n+1}^{11}I_{n+1}^{-1}(x_{n}^{1}) + \alpha_{n+1}^{11}y_{f_{0}}^{1}(I_{n+1}^{-1}(x_{n}^{1}))+ \beta_{n+1}^{11}z_{f_{0}}^{1}(I_{n+1}^{-1}(x_{n}^{1}))+ d_{n+1}^{11}, \\
&\gamma_{n+1}^{11}z_{f_{0}}^{1}(I_{n+1}^{-1}(x_{n}^{1}))+ e_{n+1}^{11}I_{n+1}^{-1}(x_{n}^{1})+ f_{n+1}^{11}) \\
& =(y_{n}^{1}, z_{n}^{1}) .
\end{align*}
For $ n=K^{11}+1,  \ldots, N-1$
\begin{align*}
f_{0}(x_{n}^{1}) &= (c_{n+1-K^{11}}^{12}I_{n+1}^{-1}(x_{n}^{1}) + \alpha_{n+1-K^{11}}^{12}y_{h_{0}}^{2}(I_{n+1}^{-1}(x_{n}^{1}))+ \beta_{n+1-K^{11}}^{12}z_{h_{0}}^{2}(I_{n+1}^{-1}(x_{n}^{1}))+ d_{n+1-K^{11}}^{12}, \\
&\gamma_{n+1-K^{11}}^{12}z_{h_{0}}^{2}(I_{n+1}^{-1}(x_{n}^{1}))+ e_{n+1-K^{11}}^{12}I_{n+1}^{-1}(x_{n}^{1})+ f_{n+1-K^{11}}^{12}) \\
& =(y_{n}^{1}, z_{n}^{1})
\end{align*}
This shows that $f_{0}$ is the function which interpolates the data $\{(x_{n}^{1}, y_{n}^{1}, z_{n}^{1}) : n=0, 1, \ldots, N\}$. Similarly, it can be shown that $g_{0}$ is the function which interpolates the data $\{(x_{n}^{2}, y_{n}^{2}, z_{n}^{2}) : n=0, 1, \ldots, M\}$.
Now for $x\in[x_{0}^{1}, x_{N}^{1}]$ and $x\in[x_{0}^{2}, x_{M}^{2}]$
\begin{align*}
f_{0}(I_{n}(x))= (c_{n}^{11}x + \alpha_{n}^{11}y_{f_{0}}^{1}(x)+ \beta_{n}^{11}z_{f_{0}}^{1}(x)+ d_{n}^{11}, &&\\
\gamma_{n}^{11}z_{f_{0}}^{1}(x)+ e_{n}^{11}x+ f_{n}^{11}) && \; \mathrm{for} \; n=1, 2, \ldots, K^{11}
\end{align*}
\begin{align*}
f_{0}(I_{n}(x))= (c_{n}^{12}x + \alpha_{n}^{12}y_{h_{0}}^{2}(x)+ \beta_{n}^{12}z_{h_{0}}^{2}(x)+ d_{n}^{12}, &&\\
\gamma_{n}^{12}z_{h_{0}}^{2}(x)+ e_{n}^{12}x+ f_{n}^{12}) && \; \mathrm{for} \; n=1, 2, \ldots, K^{12}
\end{align*}
and
\begin{align*}
h_{0}(J_{n}(x))= (c_{n}^{21}x + \alpha_{n}^{21}y_{f_{0}}^{1}(x)+ \beta_{n}^{21}z_{f_{0}}^{1}(x)+ d_{n}^{21}, &&\\
\gamma_{n}^{21}z_{f_{0}}^{1}(x)+ e_{n}^{21}x+ f_{n}^{21}) && \; \mathrm{for} \; n=1, 2, \ldots, K^{21}
\end{align*}
\begin{align*}
h_{0}(J_{n}(x))= (c_{n}^{22}x + \alpha_{n}^{22}y_{h_{0}}^{2}(x)+ \beta_{n}^{22}z_{h_{0}}^{2}(x)+ d_{n}^{22}, &&\\
\gamma_{n}^{22}z_{h_{0}}^{2}(x)+ e_{n}^{22}x+ f_{n}^{22}) && \; \mathrm{for} \; n=1, 2, \ldots, K^{22}
.
\end{align*}
If $F$ and $H$ are the graphs of $f_{0}$ and $h_{0}$ respectively, then
$$
F=\bigcup_{i=1}^{K^{11}}w_{i}^{11}(F)\bigcup \bigcup_{i=1}^{K^{12}}w_{i}^{12}(H)
$$
$$
H=\bigcup_{i=1}^{K^{21}}w_{i}^{21}(F)\bigcup \bigcup_{i=1}^{K^{22}}w_{i}^{22}(H)
\; .
$$
Now the uniqueness of the attractor imply that $F=G^{1}$ and $H=G^{2}$. That is $G^{1}= \{(x, f_{0}(x)): x\in I\}$ and $G^{2}= \{(x, h_{0}(x)): x\in J\}$.
\end{proof}
\begin{example}
Consider the data sets as
$$D^{1}=\{(0,5), (1,4), (2,1), (3,1), (4,4), (5,5)\}$$
$$D^{2}=\{(0,1), (1,2), (2,3), (3,2), (4,1)\}$$
realizing the graph with $K^{11}=3$, $K^{12}=2$, $K^{21}=1$, $K^{22}=3$.
Take the generalized data set $$\mathcal{D}^{1}=\{(0,5,5), (1,4,4), (2,1,1), (3,1,1), (4,4,4), (5,5,5)\}$$ and
$$\mathcal{D}^{2}=\{(0,1,1), (1,2,2), (2,3,3), (3,2,2), (4,1,1)\}$$
corresponding to $D^{1}$ and $D^{2}$ respectively. Here $y_{n}=z_{n}$ for both the generalized data sets. Choose $\alpha_{n}^{rs}=1/3$, $\beta_{n}^{rs}=1/3$, $\gamma_{n}^{rs}=1/3$ for all $r,s \in\{1, 2\}$ and $n=1, 2, \ldots, K^{rs}$.
Then Fig~$\ref{figure1}$ and Fig~$\ref{figure2}$ are the attractors of the corresponding generalized  GDIFS. \\
Keeping the free variables and constrained variables same, Fig~\ref{figure3} and Fig~\ref{figure4} are the attractors of the generalized GDIFS associated with the generalized data sets
$$\mathcal{D}^{1}=\{(0,5,3), (1,4,2), (2,1,5), (3,1,2), (4,4,1), (5,5,4)\}$$
$$\mathcal{D}^{2}=\{(0,1,2), (1,2,5), (2,3,1), (3,2,3), (4,1,1)\} \; .$$

%
%
%
%
%
%
%
%
%
%
%

Take the generalized data set $$\mathcal{D}^{1}=\{(0,5,3), (1,4,2), (2,1,5), (3,1,2), (4,4,1), (5,5,4)\}$$ and
$$\mathcal{D}^{2}=\{(0,1,2), (1,5,5), (2,3,1), (3,2,3), (4,4,1)\}$$
corresponding to $D^{1}$ and $D^{2}$ respectively. Then Fig~$\ref{figure5}$ and Fig~$\ref{figure6}$ are the attractors of the generalized  GDIFS  with the free variables and constraints variables given in following table \ref{table2}.

\begin{table}[!ht]
\centering
\caption{\label{table2}\it{}}
\begin{tabular}{||c|ccccccccc||}
\hline
$\alpha$ & $\alpha_{1}^{11}$ & $\alpha_{2}^{11}$ & $\alpha_{3}^{11}$ & $\alpha_{1}^{12}$ & $\alpha_{2}^{12}$ & $\alpha_{1}^{21}$ & $\alpha_{1}^{22}$ & $\alpha_{2}^{22}$ & $\alpha_{3}^{22}$\\

\hline

& 0.8 & 0.7 & 0.8 & 0.7 & 0.8 & 0.99 & 0.99 & 0.99 & 0.99\\

\hline

$\beta$ & $\beta_{1}^{11}$ & $\beta_{2}^{11}$ & $\beta_{3}^{11}$ & $\beta_{1}^{12}$ & $\beta_{2}^{12}$ & $\beta_{1}^{21}$ & $\beta_{1}^{22}$ & $\beta_{2}^{22}$ & $\beta_{3}^{22}$\\

\hline

& -0.3 & -0.4 & -0.2 & -0.3 & -0.4 & 0.99 & 0.99 & 0.99 & 0.99\\

\hline

$\gamma$ & $\gamma_{1}^{11}$ & $\gamma_{2}^{11}$ & $\gamma_{3}^{11}$ & $\gamma_{1}^{12}$ & $\gamma_{2}^{12}$ & $\gamma_{1}^{21}$ & $\gamma_{1}^{22}$ & $\gamma_{2}^{22}$ & $\gamma_{3}^{22}$\\

\hline

& 0.5 & 0.3 & 0.6 & 0.5 & 0.3 & 0.005 & 0.005 & 0.005 & 0.005\\

\hline
\end{tabular}
\end{table}
\end{example}

\begin{figure}[!ht]
\begin{center}
\includegraphics[width=7cm]{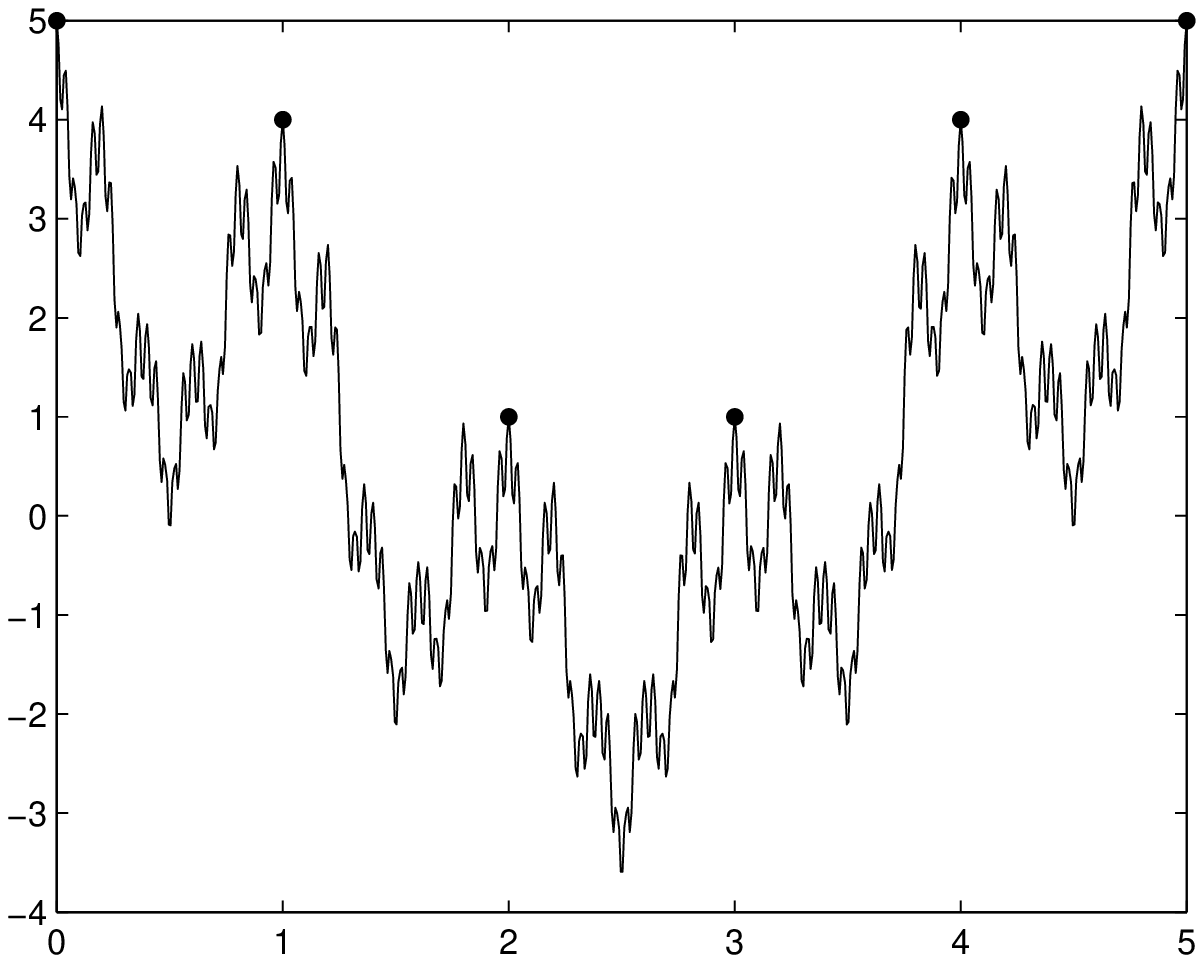}
\caption{}
\label{figure1}
\quad
\includegraphics[width=7cm]{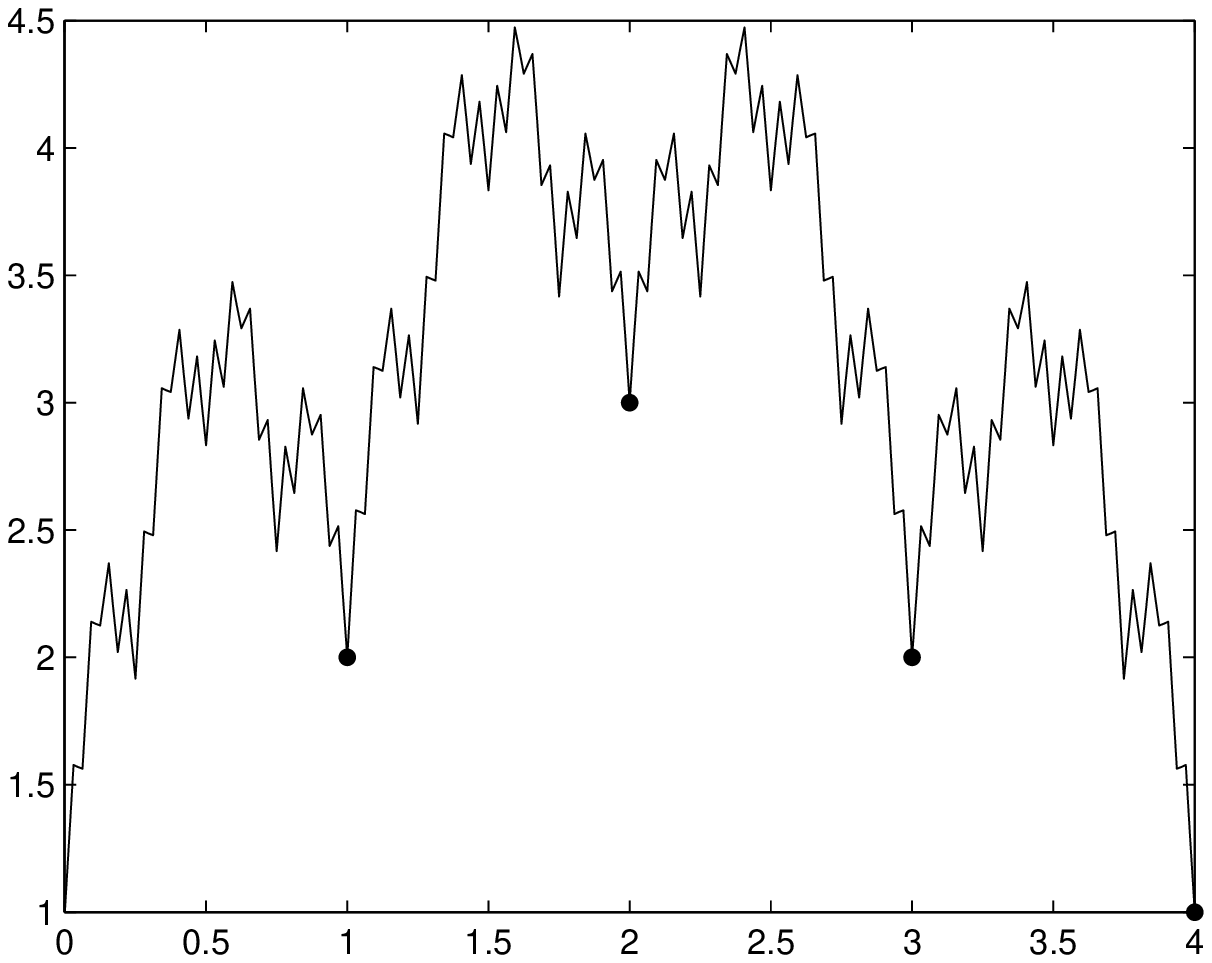}
\caption{}
\label{figure2}
\end{center}
\end{figure}

\begin{figure}[!ht]
\begin{center}
\includegraphics[width=7cm]{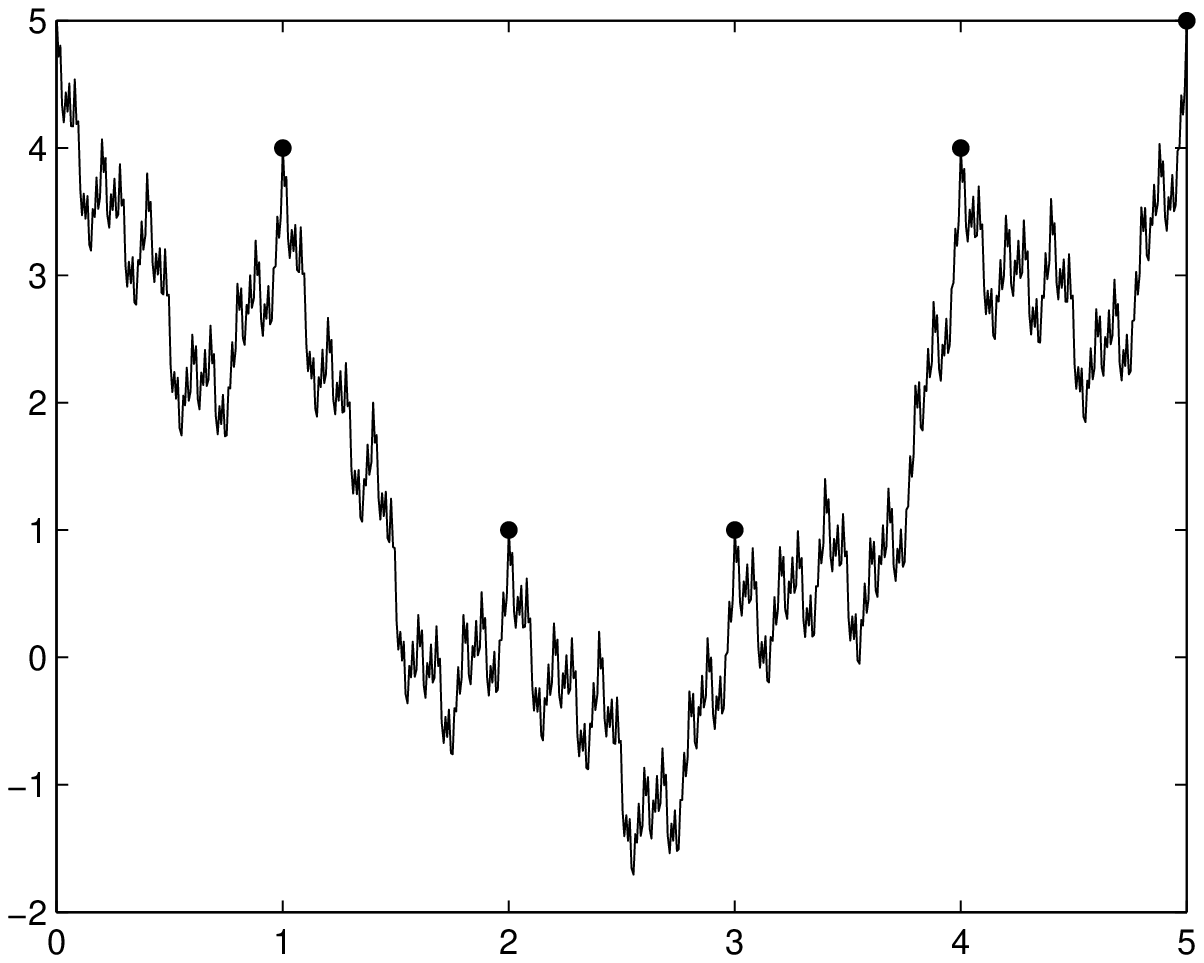}
\caption{}
\label{figure3}
\quad
\includegraphics[width=7cm]{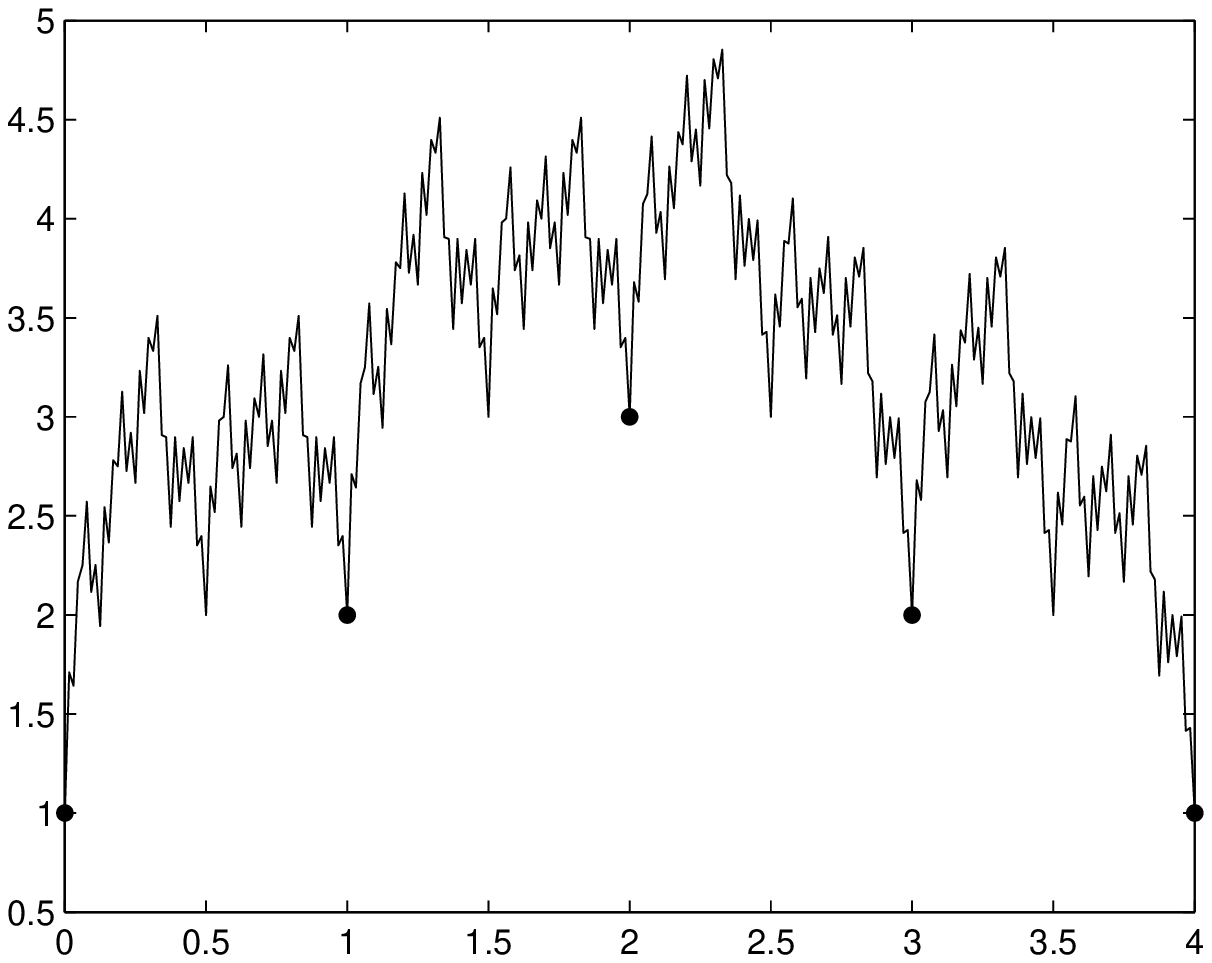}
\caption{}
\label{figure4}
\end{center}
\end{figure}

\begin{figure}[!ht]
\begin{center}
\includegraphics[width=7cm]{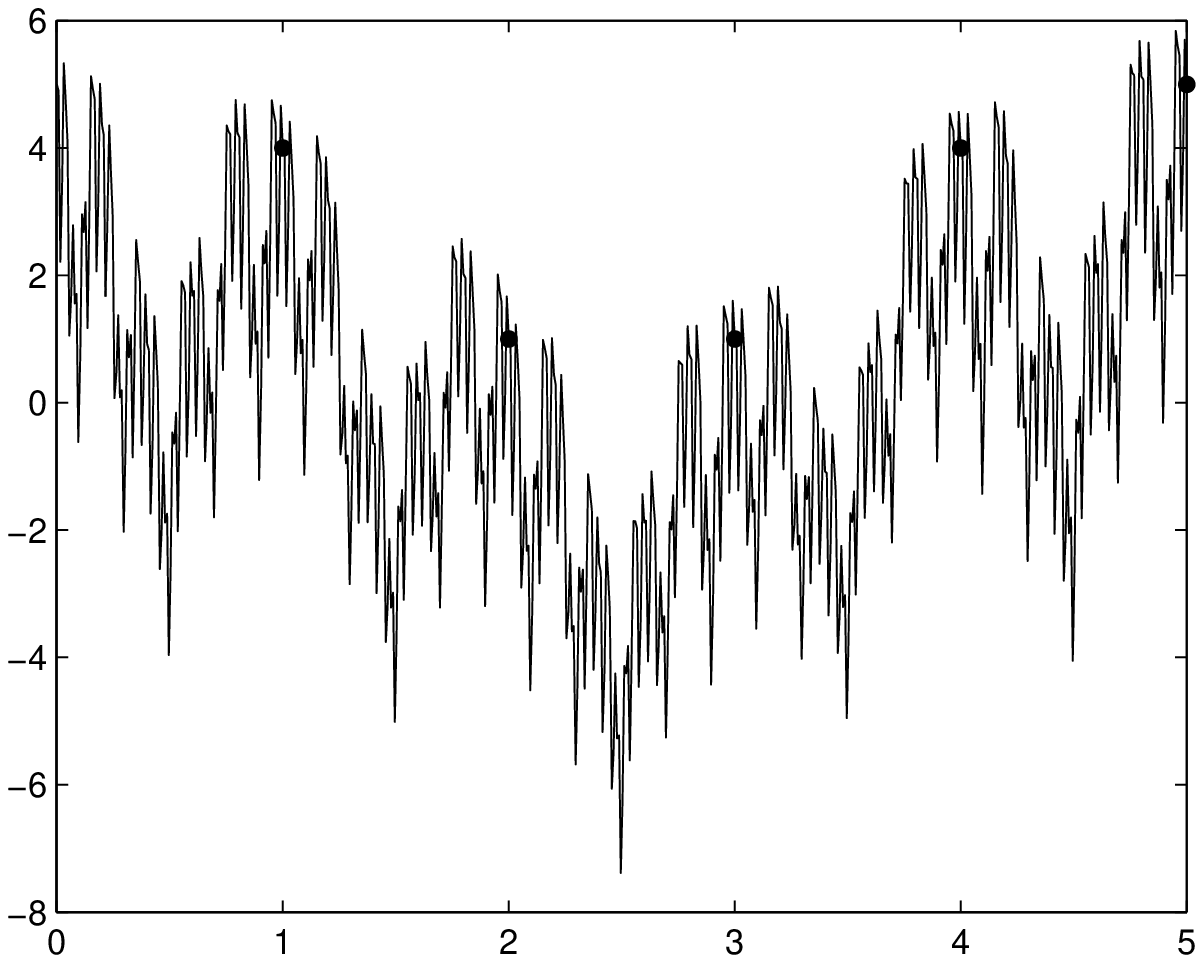}
\caption{}
\label{figure5}
\quad
\includegraphics[width=7cm]{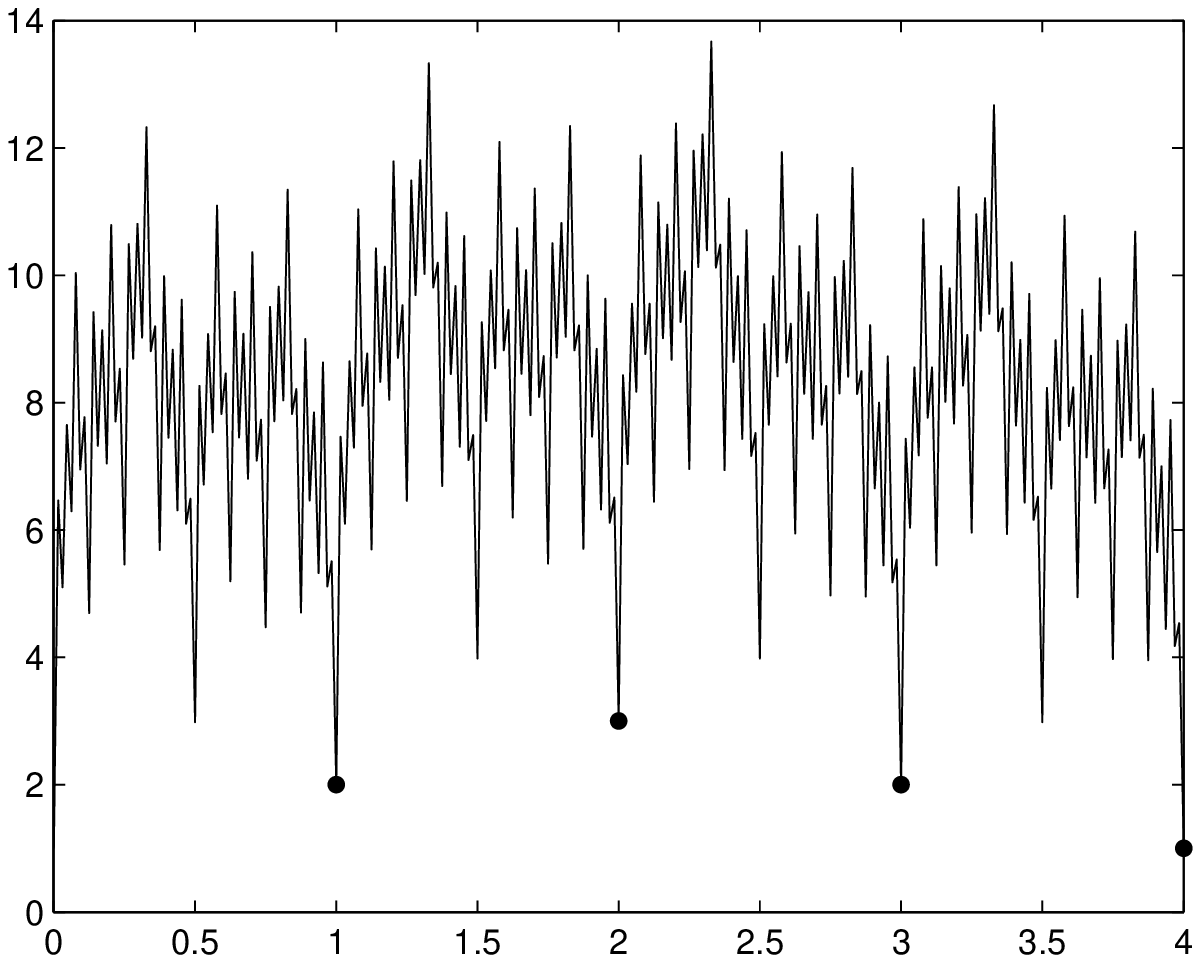}
\caption{}
\label{figure6}
\end{center}
\end{figure}

\baselineskip=12pt
\bibliographystyle{amsplain}
\bibliography{references}
\end{document}